\documentclass[12pt,reqno]{amsart}

\usepackage{amsmath,amssymb,amsfonts,amsthm,latexsym,graphicx,multirow,hyperref,enumerate,tikz,lmodern}

\oddsidemargin=0.2in
\evensidemargin=0.2in
\topmargin=-0.2in
\textwidth=16cm
\textheight=23cm 

\newcommand\A{\mathrm{A}} \newcommand\Alt{\mathrm{Alt}} \newcommand\Aut{\mathrm{Aut}}
 \newcommand\bfO{\mathbf{O}}
\newcommand\C{\mathrm{C}} \newcommand\Cos{\mathsf{Cos}}
\newcommand\Cay{\mathrm{Cay}} \newcommand\Cen{\mathbf{C}}
\newcommand\D{\mathrm{D}}

\newcommand\Inn{\mathrm{Inn}}
\newcommand\magma{{\sc Magma} }
\newcommand\Nor{\mathbf{N}}
\newcommand\Out{\mathrm{Out}}
\newcommand\Rad{\mathrm{Rad}}
 \newcommand\Soc{\mathrm{Soc}}  \newcommand\Sym{\mathrm{Sym}}

\newtheorem{theorem}{Theorem}[section]
\newtheorem{lemma}[theorem]{Lemma}
\newtheorem{proposition}[theorem]{Proposition}

\newtheorem{conjecture}[theorem]{Conjecture}
\theoremstyle{definition}

\newtheorem*{remark}{Remark}

\begin{document}

\title[Half-arc-transitive covers]{Constructing infinitely many half-arc-transitive covers of tetravalent graphs}

\author[Spiga]{Pablo Spiga}
\address{(Spiga) Dipartimento di Matematica e Applicazioni, University of Milano-Bicocca, Via Cozzi 55, 20125 Milano, Italy}
\email{pablo.spiga@unimib.it}

\author[Xia]{Binzhou Xia}
\address{(Xia) School of Mathematics and Statistics\\The University of Melbourne\\Parkville, VIC 3010\\Australia}
\email{binzhoux@unimelb.edu.au}


\maketitle

\begin{abstract}
We prove that, given a finite graph $\Sigma$ satisfying some mild conditions, there exist infinitely many tetravalent half-arc-transitive normal covers of $\Sigma$. Applying this result, we establish the existence of infinite families of finite tetravalent half-arc-transitive graphs with certain vertex stabilizers, and classify the vertex stabilizers up to order $2^8$ of finite connected tetravalent half-arc-transitive graphs. This sheds some new light on the longstanding problem of classifying the vertex stabilizers of finite tetravalent half-arc-transitive graphs.

\textit{Key words: half-arc-transitive; vertex stabilizer; normal quotient; normal cover; concentric group}

\textit{MSC2010: 20B25, 05C20, 05C25}
\end{abstract}

\section{Introduction}

Let $\Gamma$ be a graph and let $G$ be a subgroup of the automorphism group $\Aut(\Gamma)$ of $\Gamma$. We say that $G$ is \emph{vertex-transitive}, \emph{edge-transitive} or \emph{arc-transitive} if $G$ acts transitively on the vertex set, edge set or the set of ordered pairs of adjacent vertices, respectively, of $\Gamma$. If $G$ is vertex-transitive and edge-transitive but not arc-transitive, then we say that $G$ is \emph{half-arc-transitive}. The graph $\Gamma$ is said to be half-arc-transitive if $\Aut(\Gamma)$ is half-arc-transitive.

Numerous papers have been published on half-arc-transitive graphs over the last half a century (see the survey papers~\cite{CPS2015,Marusic1998}), most of which are on those of valency $4$, the smallest valency of half-arc-transitive graphs. However, somewhat surprisingly, not so many examples of tetravalent half-arc-transitive graphs are known in the literature (see~\cite{census}), compared with the considerable attention they have received.

For a graph $\Gamma$ and a group $N$ such that $N$ is normal in $G$ for some vertex-transitive subgroup $G$ of $\Aut(\Gamma)$, the \emph{normal quotient} $\Gamma/N$ is the graph whose vertex set $V(\Gamma/N)$ is the set of $N$-orbits on the vertex set $V(\Gamma)$ of $\Gamma$, with an edge of $\Gamma/N$ between vertices $\Delta$ and $\Omega$ if and only if there is an edge of $\Gamma$ between $\alpha$ and $\beta$ for some $\alpha\in\Delta$ and $\beta\in\Omega$. Such a graph $\Gamma$ is called a \emph{normal cover} of the graph $\Gamma/N$. Broadly speaking, in this paper, given a graph $\Sigma$ satisfying some mild conditions, we establish the existence of infinitely many tetravalent half-arc-transitive graphs that are normal covers of $\Sigma$.

Let $p$ be a prime number. For a positive integer $m$, denote the largest power of $p$ dividing $m$ by $m_p$. Moreover, given a finite group $X$, let $\bfO_p(X)$ denote the largest normal $p$-subgroup of $X$. Our main result is as follows.

\begin{theorem}\label{ThmCover}
Let $\Sigma$ be a finite connected tetravalent graph and let $T$ be a nonabelian simple half-arc-transitive subgroup of $\Aut(\Sigma)$. Then, for each prime number $p$, such that $p>|T|_2$ and $p$ is coprime to $|T|$, there exists a finite connected tetravalent graph $\Gamma$ satisfying the following:
\begin{enumerate}[{\rm (a)}]
\item $\Gamma$ is half-arc-transitive;
\item $\Aut(\Gamma)$ has vertex stabilizer isomorphic to that of $T$;
\item $\bfO_p(\Aut(\Gamma))\ne 1$, $\Aut(\Gamma)/\bfO_p(\Aut(\Gamma))\cong T$ and $\Gamma/\bfO_p(\Aut(\Gamma))\cong\Sigma$.
\end{enumerate}
\end{theorem}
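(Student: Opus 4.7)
The strategy is to represent $\Sigma$ as a coset graph of $T$ and lift it along a split extension by an $\bbF_pT$-module. First I would fix $\alpha\in V(\Sigma)$, set $H=T_\alpha$ and pick $s\in T$ sending $\alpha$ to a representative of an $H$-orbit among its four neighbours. Because $T$ is half-arc-transitive on a tetravalent graph, $|HsH|/|H|=2$, $HsH\ne Hs^{-1}H$ and $\Sigma\cong\Cos(T,H,HsH\cup Hs^{-1}H)$; moreover $H$ is a $2$-group of order dividing $|T|_2$, and hence of order less than~$p$.

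Next, I would take a faithful $\bbF_pT$-module $V$ (for example the regular module $\bbF_pT$) and form the semidirect product $\tilde T=V\rtimes T$, identifying $T$ with a complement of $V$ and viewing $H\le T\le\tilde T$. Choosing $v\in V$ generic enough that $\langle H,vs\rangle=\tilde T$, I set $\tilde s=vs$ and define
\[
\Gamma\;=\;\Cos\bigl(\tilde T,\,H,\,H\tilde sH\cup H\tilde s^{-1}H\bigr).
\]
The projection $\tilde T\twoheadrightarrow T$ induces a graph morphism $\Gamma\to\Sigma$; since $V\cap H=1$ and $V\trianglelefteq\tilde T$, the group $V$ acts semiregularly on $V(\Gamma)$ and its orbits are the fibres of this morphism, so $\Gamma$ is a normal $V$-cover of $\Sigma$. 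The $H$-action on the four $\Gamma$-neighbours of the coset $H$ matches the $H$-action on $\Sigma(\alpha)$, so $\tilde T$ acts faithfully, vertex-transitively and half-arc-transitively on $\Gamma$ with vertex stabiliser $H$; using that $T$ is simple and $p\nmid|T|$ one deduces $\bfO_p(\tilde T)=V$. Connectedness of $\Gamma$ was built into the choice of $v$.

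The main task, and the chief obstacle, is to prove $\Aut(\Gamma)=\tilde T$. Set $A=\Aut(\Gamma)$, so $\tilde T\le A$ and $A$ is vertex-transitive. Because $\Gamma$ is tetravalent, $A_\alpha$ has order divisible only by~$2$ and~$3$; together with $p>|T|_2\geq4$ and $\gcd(p,|T|)=1$ this forces $p\nmid|A_\alpha|$, so $V$ is a Sylow $p$-subgroup of $A$. A Sylow argument on the $A$-conjugates of $V$, using $N_A(V)\supseteq\tilde T$ and the fact that $[A:\tilde T]=|A_\alpha|/|H|$ is constrained both by the bound $p>|T|_2$ and by the local structure of the cover, should yield $V\trianglelefteq A$ and hence $V=\bfO_p(A)$. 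The semiregularity of $V$ gives $(A/V)_\alpha\cong A_\alpha$, so the quotient $A/V\le\Aut(\Sigma)$ contains $T$ with vertex stabiliser isomorphic to $A_\alpha$; a local analysis at $\alpha$, leveraging the simplicity of $T$ and the inequality $|H|<p$, then forces $A_\alpha=H$ and therefore $A=\tilde T$. This last step is the delicate part: it requires ruling out that $\Aut(\Gamma)$ acquires extra local automorphisms beyond those in $\tilde T$ using only the hypotheses on $T$ rather than full information about $\Aut(\Sigma)$. Once $A=\tilde T$ is established, items~(a)--(c) are immediate, and since distinct admissible primes yield covers with non-isomorphic normal $p$-subgroups, different $p$ give pairwise non-isomorphic graphs $\Gamma$, producing the desired infinite family.
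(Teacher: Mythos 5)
Your overall architecture matches the paper's: build a connected $p$-group normal cover $\Gamma\to\Sigma$ on which $T$ lifts to a half-arc-transitive group with the same vertex stabilizer, and then show $\Aut(\Gamma)$ is no larger than the lifted group. But the two steps you treat lightly are precisely where all the work lies, and as written both are gaps. First, your explicit construction $\tilde T=V\rtimes T$ with a ``generic'' $v$ does not establish the property the argument actually needs, namely that \emph{no automorphism of $\Sigma$ outside $T$ lifts} (equivalently, that $\Nor_{\Aut(\Gamma)}(V)$ induces exactly $T$ on $\Gamma/V\cong\Sigma$). The theorem does not assume $\Aut(\Sigma)=T$; $\Sigma$ may well be arc-transitive, and if the extra automorphisms lift along your cover then $\Gamma$ is arc-transitive and (a) fails. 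Genericity of $v$ controls connectedness and (with $H^1(T,V)=0$) generation, but says nothing about lifts of automorphisms not in $\tilde T$. The paper avoids this by invoking a nontrivial result of Poto\v{c}nik and Spiga (\cite[Theorem~6 and Corollary~8]{PS2019}) which produces a covering projection whose \emph{maximal} lifting group is $T$; this yields the anchor identity $\Nor_{\Aut(\Gamma)}(P)=G$ on which everything else hangs. You would need to prove an analogue of this for your voltage assignment, which is not a routine genericity argument.

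Second, the claim $\Aut(\Gamma)=\tilde T$ is asserted via ``a Sylow argument \dots should yield $V\trianglelefteq A$'' and ``a local analysis \dots then forces $A_\alpha=H$'', but no such argument is supplied, and you yourself flag it as the delicate part. In the paper this step is the entire proof: one takes $B$ with $G$ maximal in $B$, derives $p\mid(|B:G|-1)$ from Sylow's theorem together with $\Nor_B(P)=G$, passes to the primitive action of $B/K$ on cosets of $G/K$ (where $K$ is the core), and splits into the cases $G=KP$ and $K\leqslant P$. Each case is closed using Gardiner's bound on tetravalent vertex stabilizers (\cite[Lemma~2.3]{FLX2004}), the classification of primitive groups of degree $2^\ell 3^k$ with the relevant point stabilizers \cite{LL2014}, Poto\v{c}nik's amalgam table \cite{Potocnik2009} to exclude a large elementary abelian Sylow $3$-subgroup of $B_\alpha$, and a quotient-graph argument showing a suitable solvable radical is semiregular, giving $2^\ell\leqslant|T|_2<p$ against $p\mid(2^\ell-1)$. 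Note in particular that it is not true in general that $V$ being normal in $A$ follows from a ``Sylow argument on conjugates of $V$'' alone; the paper only ever proves $A=G$ by contradiction through the maximal overgroup $B$, never by first normalizing $P$ in all of $A$. Until you supply arguments for these two points, the proposal does not constitute a proof.
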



Although it is not hard to construct a graph $\Gamma$ with a half-arc-transitive group $G$ of automorphisms, it is in general not known  whether $\Aut(\Gamma)$ is larger than $G$ to possibly make $\Aut(\Gamma)$ arc-transitive on $\Gamma$. In this sense, the significance of Theorem~\ref{ThmCover} is asserting the existence (under some mild conditions) of infinitely many half-arc-transitive graphs which are normal covers of a given connected tetravalent graph, even if the given graph is not itself half-arc-transitive. Thus, with the help of Theorem~\ref{ThmCover}, one can construct infinitely many connected tetravalent half-arc-transitive graphs with some exotic  vertex stabilizers, and we will present some examples in this paper.

For a half-arc-transitive graph $\Gamma$, the vertex stabilizer in $\Aut(\Gamma)$ will be called the \emph{vertex stabilizer} of $\Gamma$. It is not hard to construct half-arc-transitive graphs with abelian vertex stabilizers (see for instance~\cite{Marusic2005}). However, half-arc-transitive graphs with nonabelian vertex stabilizers are much more elusive and  the problem of constructing half-arc-transitive graphs with nonabelian vertex stabilizers  has received extensive attention and considerable effort  (see for instance~\cite{CM2003,CPS2015,Spiga2016,Xia}). The first infinite family of half-arc-transitive graphs with nonabelian vertex stabilizers was only constructed very recently in~\cite{Xia}. The vertex stabilizers in~\cite{Xia} are isomorphic to $\D_8\times\D_8\times \C_2^{m-6}$ for integers $m$ with $m\geqslant7$.

In Example~\ref{ExInfiniteFamily} we construct a finite connected tetravalent graph $\Sigma_m$ for every integer $m\geqslant4$ such that $\Sigma_m$ admits a half-arc-transitive action of the alternating group $\A_{2^m}$ with vertex stabilizer $\D_8\times\C_2^{m-3}$. Then, by applying Theorem~\ref{ThmCover} to the graphs in Example~\ref{ExInfiniteFamily} and to the graphs in~\cite{Xia}, we obtain the following result:

\begin{theorem}\label{ThmInfiniteFamily}
For every integer $m\geqslant4$, there exist infinitely many finite connected tetravalent half-arc-transitive graphs with vertex stabilizer $\D_8\times\C_2^{m-3}$ and, for every integer $m\geqslant7$, there exist infinitely many finite connected tetravalent half-arc-transitive graphs with vertex stabilizer  $\D_8\times \D_8\times \C_2^{m-6}$.
\end{theorem}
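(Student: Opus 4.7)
The strategy is to reduce each of the two assertions of Theorem~\ref{ThmInfiniteFamily} to a single application of Theorem~\ref{ThmCover}. For the first assertion, I fix $m\geqslant 4$ and take $\Sigma=\Sigma_m$, the graph supplied by Example~\ref{ExInfiniteFamily}, on which the nonabelian simple group $T=\A_{2^m}$ acts half-arc-transitively with vertex stabilizer $\D_8\times\C_2^{m-3}$. Since $|T|$ has only finitely many prime divisors, there are infinitely many primes $p$ satisfying $p>|T|_2$ and $\gcd(p,|T|)=1$; for each such $p$, Theorem~\ref{ThmCover} produces a finite connected tetravalent half-arc-transitive graph $\Gamma_p$ whose vertex stabilizer is isomorphic to that of $T$, namely $\D_8\times\C_2^{m-3}$, together with $\bfO_p(\Aut(\Gamma_p))\ne 1$ and $\Gamma_p/\bfO_p(\Aut(\Gamma_p))\cong\Sigma_m$. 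The second assertion is handled identically, taking $\Sigma$ to be one of the half-arc-transitive graphs of~\cite{Xia} with vertex stabilizer $\D_8\times\D_8\times\C_2^{m-6}$ and $T\leqslant\Aut(\Sigma)$ a corresponding nonabelian simple half-arc-transitive subgroup exhibited by that construction.

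The remaining step is to check that, for each fixed $m$, the family $\{\Gamma_p\}$ consists of pairwise non-isomorphic graphs. Since each vertex stabilizer is a $2$-group and every admissible prime $p$ is odd, $\bfO_p(\Aut(\Gamma_p))$ has trivial intersection with every vertex stabilizer, and hence acts semiregularly on $V(\Gamma_p)$. This forces
\[
|V(\Gamma_p)|=|V(\Sigma)|\cdot|\bfO_p(\Aut(\Gamma_p))|,
\]
which is divisible by $p$. As $p$ ranges over the infinitely many admissible primes, the orders of the $\Gamma_p$ are divisible by pairwise distinct primes, so the graphs are pairwise non-isomorphic.

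The substantive content lives upstream, in the construction of the base graphs $\Sigma_m$ supplied by Example~\ref{ExInfiniteFamily}, in the graphs of~\cite{Xia}, and in Theorem~\ref{ThmCover} itself; the argument above is routine assembly, and I do not anticipate a serious obstacle once these inputs are in place. The only delicate point is the non-isomorphism argument, which relies crucially on the observation that the vertex stabilizer is a $2$-group and the primes $p$ are large and odd, so that $\bfO_p(\Aut(\Gamma_p))$ must act semiregularly and its order must appear as a factor of $|V(\Gamma_p)|$.
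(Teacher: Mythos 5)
Your proposal follows essentially the same route as the paper: apply Theorem~\ref{ThmCover} to the graphs $\Sigma_m$ of Example~\ref{ExInfiniteFamily} (via Proposition~\ref{PrpInfiniteFamily}) for the first assertion and to the graphs of~\cite{Xia} for the second. Your added verification that the graphs $\Gamma_p$ are pairwise non-isomorphic (via semiregularity of $\bfO_p(\Aut(\Gamma_p))$ and the fact that $p$ divides $|V(\Gamma_p)|$ but not $|V(\Gamma_q)|$ for $q\ne p$) is correct and makes explicit a point the paper leaves implicit.
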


A group $H=\langle a_1,\dots,a_m\rangle$ is said to be \emph{concentric} if $|\langle a_i,\dots,a_j\rangle|=2^{j-i+1}$ for all $1\leqslant i<j\leqslant m$ and there exists a group isomorphism
\[
\varphi:\langle a_1,\dots,a_{m-1}\rangle\to\langle a_2,\dots,a_m\rangle
\]
such that $a_i^\varphi=a_{i+1}$ for $i=1,\dots,m-1$. (Note in the definition that each $a_i$ is necessarily an involution if $m\geqslant3$.) The study of concentric groups dates back to Glauberman~\cite{Glauberman1969,Glauberman1971} about 50 years ago and was made systematic by Maru\v{s}i\v{c} and Nedela~\cite{MN2001} in 2001. It was proved in~\cite{MN2001} that a group $H$ is concentric if and only if there exist a connected tetravalent graph $\Gamma$ and a subgroup $G$ of $\Aut(\Gamma)$ such that $G$ is half-arc-transitive with vertex stabilizer $H$. Moreover, Maru\v{s}i\v{c} and Nedela gave a characterization of concentric groups in terms of their defining relations~\cite[Theorem~5.5]{MN2001} and determined the concentric groups of order up to $2^8$~\cite[Theorem~6.3]{MN2001}. Let
\begin{align*}
\mathcal{H}_7=\langle a_1,\dots,a_7\mid\ &a_i^2=1\text{ for }i\leqslant7,\ (a_ia_j)^2=1\text{ for }|i-j|\leqslant4,\\
&(a_1a_6)^2=a_3,\ (a_2a_7)^2=a_4,\ (a_1a_7)^2=a_5\rangle.
\end{align*}

\begin{theorem}[Glauberman-Maru\v{s}i\v{c}-Nedela]\label{ThmMN2001}
The following are precisely the concentric groups of order at most $2^8$:
\begin{align*}
\C_2^m\text{ for }1\leqslant m\leqslant8,&\quad\D_8\times\C_2^{m-3}\text{ for }3\leqslant m\leqslant8,\\
\D_8\times\D_8\times\C_2^{m-6}\text{ for }6\leqslant m\leqslant8,&\quad\mathcal{H}_7\times\C_2^{m-7}\text{ for }7\leqslant m\leqslant8.
\end{align*}
\end{theorem}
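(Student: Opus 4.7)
The plan is to invoke the defining-relation characterization of concentric groups established in \cite[Theorem~5.5]{MN2001} and then proceed by induction on the rank $m$. The key structural observations coming straight out of the definition are: $|H|=2^m$; each $a_i$ is an involution for $m\geqslant 3$; and consecutive generators $a_i,a_{i+1}$ commute because $\langle a_i,a_{i+1}\rangle\cong\C_2^2$. Moreover, both $\langle a_1,\dots,a_{m-1}\rangle$ and $\langle a_2,\dots,a_m\rangle$ are themselves concentric of rank $m-1$ (the conditions for $i<j\leqslant m-1$ or $2\leqslant i<j$ restrict from $H$, and the required shift is the appropriate restriction of $\varphi$), and $\varphi$ furnishes an isomorphism between these two ``end'' subgroups. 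This translation invariance is the main tool driving the induction.

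For the base cases I would argue directly. For $m=1,2$ one immediately obtains $\C_2$ and $\C_2^2$; for $m=3$ the only freedom is whether $a_1$ and $a_3$ commute, yielding $\C_2^3$ or $\D_8$. I would then, for each $m$ from $4$ up to $8$, classify the possible concentric extensions of the groups already obtained at rank $m-1$. Since the pattern of consecutive commutations is fixed, such an extension is determined by the commutator data $[a_1,a_k]$ for $k\leqslant m$, and the shift $\varphi$ sharply restricts which systems of commutators can occur: the family $\{[a_i,a_{i+k}]\}$ at each fixed distance $k$ must be shift-invariant, so once $[a_1,a_{1+k}]$ is fixed inside $\langle a_1,\dots,a_{m-1}\rangle\cap\langle a_2,\dots,a_m\rangle$, the remaining distance-$k$ commutators are determined by successive applications of $\varphi$.

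The existence part is handled by exhibiting an explicit concentric generating sequence for each group on the list. For $\C_2^m$, take $m$ pairwise commuting involutions. For $\D_8\times\C_2^{m-3}$, let $a_1,a_2,a_3$ generate the $\D_8$-factor with $a_1,a_3$ non-commuting and $a_2$ central, and let $a_4,\dots,a_m$ run through independent commuting involutions in the $\C_2^{m-3}$-factor, chosen so that the shift $\varphi$ translates the $\D_8$-block one step along and shuffles the trivial tail. The groups $\D_8\times\D_8\times\C_2^{m-6}$ are handled analogously by concatenating two overlapping $\D_8$-blocks, and $\mathcal{H}_7$ is concentric directly from its given presentation (where the ``twist'' relations $(a_1a_6)^2=a_3$, $(a_2a_7)^2=a_4$, $(a_1a_7)^2=a_5$ are manifestly shift-invariant). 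In each case the shift $\varphi$ arises from the visible symmetry of the factors.

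The main obstacle is completeness, i.e.\ ruling out any further concentric groups of order at most $2^8$. My approach would be to track, at each rank $m\leqslant 8$, the finitely many admissible commutator systems $(c_2,\ldots,c_{m-1})$ with $c_k=[a_1,a_{1+k}]$ subject to the shift constraint, and to verify that each surviving system forces the isomorphism type of $H$ into the stated list. Because the orders involved are small, the enumeration terminates quickly and can be cross-checked in \magma against the small-groups library; the only nonobvious rank at which a genuinely new family appears is $m=7$, where the twist relations of $\mathcal{H}_7$ emerge as the unique non-split concentric extension of $\D_8\times\D_8$.
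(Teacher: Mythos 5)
This theorem is not proved in the paper at all: it is imported verbatim from Maru\v{s}i\v{c} and Nedela~\cite[Theorem~6.3]{MN2001} (with roots in Glauberman's work), so there is no in-paper argument to compare your proposal against. Your overall strategy --- invoke the defining-relation characterization of \cite[Theorem~5.5]{MN2001}, induct on the rank $m$, and enumerate the admissible shift-invariant commutator systems --- is essentially the route taken in the cited source, so the plan is reasonable. However, as written it contains a concrete error and a substantive gap.

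The error is in your existence construction for $\D_8\times\C_2^{m-3}$. If $a_1,a_2,a_3$ generate the $\D_8$-factor (with $a_1,a_3$ non-commuting) and $a_4,\dots,a_m$ lie in the elementary abelian complement, then $\langle a_1,\dots,a_{m-1}\rangle$ contains a copy of $\D_8$ and is nonabelian, while $\langle a_2,\dots,a_m\rangle$ is elementary abelian; these two subgroups are not isomorphic, so the required shift $\varphi$ cannot exist and the sequence is not concentric. There is no way for $\varphi$ to ``translate the $\D_8$-block one step along'' if the block is pinned to the first three generators. The correct concentric presentations place the non-commutation at maximal distance --- e.g.\ $(a_ia_j)^2=1$ for $|i-j|\leqslant m-2$ with a single twist $(a_1a_m)^2$ equal to a suitable central involution --- exactly as the displayed presentation of $\mathcal{H}_7$ (and of $\mathcal{H}_7\times\C_2$ in Subsection~3.2 of the paper) illustrates. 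The same objection applies to your description of $\D_8\times\D_8\times\C_2^{m-6}$ via ``two overlapping $\D_8$-blocks.'' The gap is in completeness: you assert that a concentric extension ``is determined by the commutator data $[a_1,a_k]$,'' but this presupposes (and does not prove) that a concentric group is generated by involutions subject only to relations expressible through pairwise commutators of generators, and your shift-invariance constraint in any case does not govern the extremal pair $a_1,a_m$, which is where all the nontrivial twists actually live. The final enumeration is deferred to an unspecified \magma computation, so the classification is a programme rather than a proof; to make it rigorous you would need to carry out (or explicitly cite) the case analysis of \cite{MN2001}.
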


Maru\v{s}i\v{c}~\cite{Marusic2005} has shown that every nontrivial elementary abelian $2$-group is the vertex stabilizer of a connected tetravalent half-arc-transitive graph. Similar results have been proved for $\D_8$ by Conder and Maru\v{s}i\v{c}~\cite{CM2003} and for $\D_8\times\C_2$ by Conder, Poto\v{c}nik and \v{S}parl~\cite{CPS2015}. Moreover, the first author showed in~\cite{Spiga2016} that $\D_8\times\D_8$ and $\mathcal{H}_7$ are both vertex stabilizers of connected tetravalent half-arc-transitive graphs in a response to a problem posed in~\cite{MN2001}, and the second author recently proved in~\cite{Xia} that $\D_8\times\D_8\times\C_2^{m-6}$ is the vertex stabilizer of a connected tetravalent half-arc-transitive graph for every integer $m\geqslant7$. In light of these results and Theorem~\ref{ThmInfiniteFamily}, we see that the only concentric group of order at most $2^8$ that is not known to be the vertex stabilizer of a connected tetravalent half-arc-transitive graph is $\mathcal{H}_7\times\C_2$. In Example~\ref{ExSmallConcentric}, we apply Theorem~\ref{ThmCover} to construct connected tetravalent half-arc-transitive graphs with vertex stabilizer $\mathcal{H}_7\times\C_2$. This leads to the next theorem.

\begin{theorem}\label{ThmSmallConcentric}
Every concentric group of order at most $2^8$ is the vertex stabilizer of infinitely many finite connected tetravalent half-arc-transitive graphs.
\end{theorem}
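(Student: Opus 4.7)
The plan is essentially bookkeeping. By Theorem~\ref{ThmMN2001} there are only finitely many concentric groups of order at most $2^8$, organized into the families $\C_2^m$ ($1\le m\le 8$), $\D_8\times\C_2^{m-3}$ ($3\le m\le 8$), $\D_8\times\D_8\times\C_2^{m-6}$ ($6\le m\le 8$) and $\mathcal{H}_7\times\C_2^{m-7}$ ($7\le m\le 8$), and I would verify each entry separately. The guiding principle is that, by Theorem~\ref{ThmCover}, in every case it suffices to exhibit a \emph{single} finite connected tetravalent half-arc-transitive graph $\Sigma$ whose automorphism group contains a nonabelian simple half-arc-transitive subgroup $T$ with vertex stabilizer isomorphic to the desired concentric group $H$: then the primes $p$ that are coprime to $|T|$ and strictly greater than $|T|_2$ produce infinitely many normal covers $\Gamma_p$ of $\Sigma$, all half-arc-transitive with vertex stabilizer $H$, and pairwise nonisomorphic because $|\Gamma_p|/|\Sigma|=|\bfO_p(\Aut(\Gamma_p))|$ is a nontrivial $p$-power growing with $p$.

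With this reduction in hand, the case analysis writes itself. The two families $\D_8\times\C_2^{m-3}$ with $m\ge 4$ and $\D_8\times\D_8\times\C_2^{m-6}$ with $m\ge 7$ have already been settled in Theorem~\ref{ThmInfiniteFamily}, which itself is an application of Theorem~\ref{ThmCover} to the seed graphs of Example~\ref{ExInfiniteFamily} (with $T=\A_{2^m}$) and of~\cite{Xia}. The elementary abelian cases $\C_2^m$ are classical and follow from Maru\v{s}i\v{c}'s construction in~\cite{Marusic2005}, which already provides infinite families. For $\D_8$ and $\D_8\times\C_2$ I would invoke~\cite{CM2003} and~\cite{CPS2015} respectively (the latter is anyway absorbed by Theorem~\ref{ThmInfiniteFamily} at $m=4$). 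For $\D_8\times\D_8$ and $\mathcal{H}_7$ I would use the graphs constructed by the first author in~\cite{Spiga2016} as seeds and apply Theorem~\ref{ThmCover}, after verifying that each of those graphs does carry a nonabelian simple half-arc-transitive subgroup in its automorphism group.

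The only concentric group of order at most $2^8$ that is not settled by the existing literature is $\mathcal{H}_7\times\C_2$, and this is handled in Example~\ref{ExSmallConcentric}: one constructs a finite connected tetravalent graph $\Sigma$ on which some nonabelian simple group $T$ acts half-arc-transitively with vertex stabilizer isomorphic to $\mathcal{H}_7\times\C_2$, and then feeds $(\Sigma,T)$ into Theorem~\ref{ThmCover}. This is where I expect the real work to lie. Nonabelian vertex stabilizers of order $2^8$ in the tetravalent half-arc-transitive context are notoriously scarce (cf.~\cite{Spiga2016,Xia}), and imposing in addition that an ambient nonabelian simple group act half-arc-transitively substantially constrains the search for the seed. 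Once Example~\ref{ExSmallConcentric} is in place, however, Theorem~\ref{ThmSmallConcentric} follows immediately by patching together the cases above.
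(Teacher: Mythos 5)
Your overall strategy is exactly the paper's: reduce everything to Theorem~\ref{ThmCover} by exhibiting, for each concentric group $H$ of order at most $2^8$ not already covered by~\cite{Marusic2005} or Theorem~\ref{ThmInfiniteFamily}, a connected tetravalent seed graph carrying a half-arc-transitive \emph{nonabelian simple} group with vertex stabilizer $H$, and you correctly identify $\mathcal{H}_7\times\C_2$ (Example~\ref{ExSmallConcentric}) as the one genuinely new seed needed. You also rightly insist that the seeds from~\cite{Spiga2016} be checked to admit such a simple group (they do: they are coset graphs of alternating groups).

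The one concrete gap is the case $H=\D_8$. Invoking~\cite{CM2003} alone does not suffice: that paper produces a single half-arc-transitive graph, not infinitely many, and to bootstrap it through Theorem~\ref{ThmCover} you would need its automorphism group to contain a half-arc-transitive nonabelian simple subgroup. It does not: any vertex-transitive subgroup has order divisible by the number of vertices $10752=2^9\cdot3\cdot7$, and no nonabelian simple $\{2,3,7\}$-group (these are $\mathrm{PSL}(2,7)$, $\mathrm{PSL}(2,8)$ and $\mathrm{U}(3,3)$) has order divisible by $2^9$. This is precisely why the paper supplies Example~\ref{ExD8}, a coset graph on which $\A_{10}$ acts half-arc-transitively with vertex stabilizer $\D_8$; substituting that seed for~\cite{CM2003} in your case list makes your argument coincide with the paper's proof.
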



We prove Theorem~\ref{ThmCover} in Section~\ref{sec:cover}. Then in Section~\ref{sec:1} we construct some connected tetravalent graphs admitting a half-arc-transitive nonabelian simple group action with vertex stabilizer $\mathcal{H}_7\times\C_2$ and $\D_8\times\C_2^{m-3}$ for $m\geqslant3$, respectively, which will be used in Section~\ref{sec:4} to  prove Theorems~\ref{ThmInfiniteFamily} and~\ref{ThmSmallConcentric}. In Section~\ref{sec:5} we briefly discuss the relevance of our work and a conjecture of D\v{z}ambi\'c-Jones and Conder concerning faithful amalgams. We also include a natural open problem at the end of Section~\ref{sec:5}.

\section{Proof of Theorem~\ref{ThmCover}}\label{sec:cover}

For a group $X$, let $\Soc(X)$ denote the socle of $X$ and let $\Rad(X)$ denote the maximal normal solvable subgroup of $X$. Let $\Gamma$ be a graph, let $G$ be a vertex-transitive subgroup of $\Aut(\Gamma)$ and let $N$ be a normal subgroup of $G$. Then the group $G$ induces a vertex-transitive subgroup of $\Aut(\Gamma/N)$. Denote by $\alpha^N$ and $\beta^N$ the $N$-orbits containing the vertices $\alpha$ and $\beta$, respectively, of $\Gamma$. If $\alpha^N$ and $\beta^N$ are adjacent in $\Gamma/N$, then each vertex in $\alpha^N$ is adjacent to the same number of vertices in $\beta^N$ (because $N$ is transitive on both sets). Moreover, the stabilizer in $G$ of the vertex $\alpha^N$ in $\Gamma/N$ is $G_\alpha N$.

See~\cite[Subsection~2.2]{PS2019} for the definition of \emph{regular covering projection}, \emph{lift} and \emph{group of covering transformations}.

\begin{proof}[Proof of Theorem~$\ref{ThmCover}$]
Let $\Sigma$ and $T$ be as in Theorem~\ref{ThmCover} and let $p$ be a prime number such that $p>|T|_2$ and $p$ is coprime to $|T|$. Viewing~\cite[Corollary~8]{PS2019} and applying~\cite[Theorem~6]{PS2019} with the prime $p$, the graph $\Sigma$ and the group of automorphisms $T$, we obtain a regular covering projection $\wp:\Gamma\to\Sigma$ such that the following hold:
\begin{enumerate}[{\rm (i)}]
\item $\Gamma$ is finite;
\item the maximal group that lifts along $\wp$ is $T$;
\item the group of covering transformations of $\wp$ is a $p$-group.
\end{enumerate}
Let $A=\mathrm{Aut}(\Gamma)$, let $G$ be the subgroup of $A$ that $T$ lifts to along $\wp$, and let $P$ be the group of covering transformations of $\wp$. Then conclusion~(iii) shows that $P$ is a $p$-group, and $G/P\cong T$ is nonabelian simple. Since $P$ is a normal solvable subgroup of $G$, it follows that $P=\Rad(G)$. Moreover, we deduce from conclusion~(ii) and~\cite[Lemma~1]{PS2019} that
\begin{equation}\label{Eqn1}
\Nor_A(P)=G.
\end{equation}
Since $P=\Rad(G)$ is characteristic in $G$, we derive that $P$ is normal in $\Nor_A(G)$, that is, $\Nor_A(G)\leqslant\Nor_A(P)$. Thus it follows from~\eqref{Eqn1} that
\begin{equation}\label{Eqn2}
\Nor_A(G)=G.
\end{equation}
We aim to prove that $A=G$, from which the proof of Theorem~\ref{ThmCover} immediately follows. Assume for a contradiction that $A>G$. Then $G<B$ for some subgroup $B$ of $A$ such that $G$ is maximal in $B$.

Let $\alpha$ be a vertex of $\Gamma$. Since $T$ is half-arc-transitive on $\Sigma$, the group $G$ is half-arc-transitive on $\Gamma$. This implies that $G_\alpha$ is a $2$-group and $B=GB_\alpha$ is edge-transitive and vertex-transitive on $\Gamma$. It follows that $|B:G|=|GB_\alpha:G|=|B_\alpha:G_\alpha|$ divides $|B_\alpha|$. As $B_\alpha$ is a $\{2,3\}$-group and $p>|T|_2\geqslant5$, we infer that $p$ is coprime to $|B:G|$. Since $p$ is coprime to $|T|=|G/P|$, we see that $P$ is a Sylow $p$-subgroup of $B$. According to Sylow's theorem, the number of Sylow $p$-subgroups of $B$ is $|B:\Nor_B(P)|\equiv1\pmod p$ and so $p$ divides $|B:\Nor_B(P)|-1$. By~\eqref{Eqn1} we have $\Nor_B(P)=G$. Hence
\begin{equation}\label{Eqn3}
p\mid(|B:G|-1).
\end{equation}

Let $K$ be the core of $G$ in $B$. Then $K\unlhd B$, $K\leqslant G$, and the action of $B/K$ on the set $\Omega$ of right cosets of $G/K$ in $B/K$ is faithful and primitive of degree $|B:G|$. Since both $K$ and $P$ are normal in $G$, the group $KP$ is normal in $G$, which implies that $KP/P$ is normal in $G/P$. As $G/P\cong T$ is a simple group, we deduce that either $G=KP$ or $K\leqslant P$.

\vspace{1em}
\noindent\textbf{Case 1.} $G=KP$.
\vspace{1em}

\noindent In this case, $P\cap K$ is a normal subgroup of $K$ with
\[
K/(P\cap K)\cong KP/P=G/P\cong T
\]
nonabelian simple. Since $P\cap K$ is solvable, we conclude that
\[
P\cap K=\Rad(K)
\]
is characteristic in $K$. As $K$ is normal in $B$, it follows that
\[
P\cap K\unlhd B.
\]
Note that $|G/K|=|KP/K|=|P/(P\cap K)|$ is a power of $p$ and $G\neq K$ by~\eqref{Eqn2}. We have
\[
|G/K|=p^n
\]
for some positive integer $n$.

Suppose that $|B_\alpha|$ is divisible by $3$. Then $B_\alpha$ is $2$-transitive on the neighborhood of $\alpha$ in $\Gamma$, and so it follows from a result of Gardiner (see for instance~\cite[Lemma~2.3]{FLX2004}) that $|B_\alpha|$ divides $2^43^6$. Now $B/K$ is a primitive group of degree $|B:G|=|B_\alpha:G_\alpha|$ dividing $2^33^6$ such that the point stabilizer $G/K$ is a $p$-group. We deduce from~\cite{LL2014} that $B/K$ is an affine group of degree $3^k$ with $3\leqslant k\leqslant6$, and $\Soc(B/K)$ is the unique Sylow $3$-subgroup of $B/K$. Since $B/K=(G/K)(B_\alpha K/K)$ and $|G/K|$ is coprime to $3$, it follows that $\Soc(B/K)\unlhd B_\alpha K/K\cong B_\alpha/K_\alpha$. Note that
\[
|\Soc(B/K)|=3^k=|B:G|=|B_\alpha:G_\alpha|=|B_\alpha|_3
\]
as $G_\alpha$ is a $2$-group. We conclude that the Sylow $3$-subgroup of $B_\alpha$ is elementary abelian of order $3^k\geqslant3^3$. The structure of the vertex stabilizer $B_\alpha$ is described in~\cite[Table~1]{Potocnik2009}, which shows that $B_\alpha$ cannot have an elementary abelian Sylow $3$-subgroup of order at least $3^3$, a contradiction. Thus $B_\alpha$ is a $2$-group, and so $|B:G|=|B_\alpha:G_\alpha|$ is a power of $2$, say,
\[
|B:G|=2^\ell.
\]
Note that $\ell>1$ by~\eqref{Eqn2}.

Since $|B:G|=2^\ell$ and $|G:K|=p^n$, we see that $B/K$ has order $2^\ell p^n$ and thus is solvable. Moreover, as $B/K$ is a primitive group of degree $2^\ell$, it follows that $\Soc(B/K)$ is an elementary abelian group of order $2^\ell$. Let $H$ be the subgroup of $B$ such that $H/K=\Soc(B/K)$. The reader may find Figure~\ref{Fig1} useful at this point.

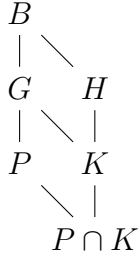
\begin{figure}[htbp]
\begin{tikzpicture}[node distance=1cm]
\node(A0){$B$};
\node(A1)[below of=A0]{$G$};
\node(A3)[right of=A1]{$H$};
\node(A6)[below of=A3]{$K$};
\node(A7)[below of=A1]{$P$};
\node(A8)[below of=A6]{$P\cap K$};
\draw(A0)--(A1);
\draw(A0)--(A3);
\draw(A1)--(A6);
\draw(A3)--(A6);
\draw(A1)--(A7);
\draw(A6)--(A8);
\draw(A7)--(A8);
\end{tikzpicture}
\caption{The structure of $B$}\label{Fig1}
\end{figure}

Let $\overline{B}=B/(P\cap K)$, $\overline{H}=H/(P\cap K)$, $\overline{K}=K/(P\cap K)$ and $\overline{C}=\Cen_{\overline{H}}(\overline{K})$. Then $\overline{K}\cong T$, and both $\overline{H}$ and $\overline{K}$ are normal in $\overline{B}$. It follows that $\overline{C}=\overline{H}\cap\Cen_{\overline{B}}(\overline{K})\unlhd\overline{B}$, and
\[
\overline{H}/\overline{C}\lesssim\Aut(\overline{K})\cong\Aut(T).
\]
Moreover,
\begin{equation}\label{Eqn5}
\overline{C}\,\overline{K}/\overline{C}\cong\overline{K}/(\overline{K}\cap\overline{C})\cong\Inn(\overline{K})\cong\Inn(T).
\end{equation}
Thus $\overline{H}/(\overline{C}\,\overline{K})\lesssim\Out(T)$. Let $C$ be the subgroup of $H$ containing $P\cap K$ such that $C/(P\cap K)=\overline{C}$. Then
\[
C\unlhd B
\]
and $H/(CK)\lesssim\Out(T)$. Now $CK\unlhd B$ and so $CK/K\unlhd B/K$. As $CK/K\leqslant H/K$ and $H/K=\Soc(B/K)$ is a minimal normal subgroup of the affine primitive group $B/K$, it follows that either $CK/K=1$ or $CK/K=H/K$. If $CK/K=1$, then the elementary abelian $2$-group $H/K=H/(CK)$ is isomorphic to a subgroup of $\Out(T)$, which implies that
\[
|B:G|=2^\ell=|H/K|\leqslant |\Out(T)|_2\leqslant |T|_2<p,
\]
contradicting~\eqref{Eqn3}. (Observe that the inequality $|\Out(T)|_2\leqslant |T|_2$ follows by inspecting the list of finite simple groups.) Therefore, $CK/K=H/K$ and hence $H=CK$. This in turn with~\eqref{Eqn5} implies that
\[
H/C=CK/C\cong\overline{C}\,\overline{K}/\overline{C}\cong T.
\]
Note that $T$ is the unique nonsolvable composition factor of $H$ as $H/K$ is solvable and $K$ is a $p$-group extended by $T$. We then conclude that
\[
C=\Rad(H).
\]
Consequently,
\[
C\cap K=\Rad(H)\cap K=\Rad(K)=P\cap K
\]
and so
\[
|C/(P\cap K)|=|C/(C\cap K)|=|CK/K|=|H/K|=2^\ell.
\]
The reader may find Figure~\ref{Fig2} useful at this point.

\begin{figure}[htbp]
\begin{tikzpicture}[node distance=2cm]
\node(A0){$B$};
\node(A1)[below of=A0]{$G$};
\node(A3)[right of=A1]{$H$};
\node(A6)[below of=A3]{$K$};
\node(A7)[below of=A1]{$P$};
\node(A8)[below of=A6]{$P\cap K$};
\node(A9)[right of=A6]{$C$};
\path[-] (A0) edge node [left]{$2^\ell$}(A1);
\path[-] (A0) edge node [right]{\hspace{0.2em}$p^n$}(A3);
\path[-] (A1) edge node [left]{$T$}(A7);
\path[-] (A7) edge node [right]{\hspace{0.2em}$p^n$}(A8);
\path[-] (A1) edge node [right]{\hspace{0.2em}$p^n$}(A6);
\path[-] (A3) edge node [right]{\hspace{-0.1em}$2^\ell$}(A6);
\path[-] (A6) edge node [right]{\hspace{-0.25em}$T$}(A8);
\path[-] (A9) edge node [right]{$T$}(A3);
\path[-] (A9) edge node [right]{$2^\ell$}(A8);
\end{tikzpicture}
\caption{More detailed structure of $B$} \label{Fig2}
\end{figure}
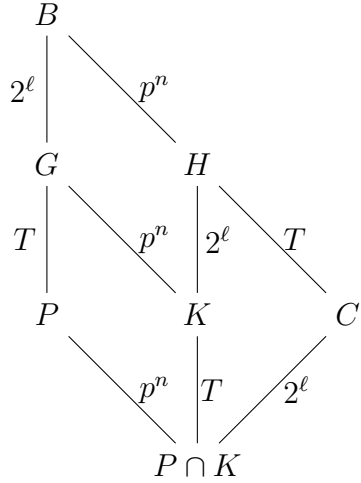


Consider the quotient graph $\Gamma/C$. Let $N$ be the kernel of $B$ acting on $V(\Gamma/C)$. Since $H$ is a normal subgroup of $B$ with index $p^n$ odd and $B_\alpha$ is a $2$-group, we have $B_\alpha\leqslant H$. Consequently, $N=CN_\alpha\leqslant CB_\alpha\leqslant H$. Moreover, $N=CN_\alpha$ is a $\{2,p\}$-group and thus is solvable. Hence $N\leqslant\Rad(H)=C$. This shows that the action of $B/C$ on $V(\Gamma/C)$ is faithful. Suppose that $C_\alpha\neq1$. Then the number of orbits of $C_\alpha$ on the neighborhood of $\alpha$ in $\Gamma$ is less than $4$. It follows that the valency of $\Gamma/C$ is less than $4$ and so must be $1$ or $2$, being a divisor of $4$. Thereby we conclude that $B/C\leqslant\Aut(\Gamma/C)$ is solvable, a contradiction. Thus $C_\alpha=1$.

As $C_\alpha=1$, the orbits of $C$ on $V(\Gamma)$ have size $|C|$. Since $C$ is normal in $B$ and $B$ is transitive on $V(\Gamma)$, it follows that $|C|$ divides $|V(\Gamma)|$. Hence $|C|$ divides $|G|$ as $G$ is transitive on $V(\Gamma)$. In particular, $|C|_2\leqslant|G|_2$. As $|C|_2=|C/(P\cap K)|_2=2^\ell$ and $|G|_2=|G/P|_2=|T|_2$, we then obtain $2^\ell\leqslant|T|_2$. This together with~\eqref{Eqn3} implies that $p<|B:G|=2^\ell\leqslant|T|_2$, contradicting our choice of $p$.

\vspace{1em}
\noindent\textbf{Case 2.} $K\leqslant P$.
\vspace{1em}

\noindent Let $\overline{B}=B/K$, $\overline{G}=G/K$, $\overline{P}=P/K$ and $\overline{H}=H/K=\Soc(\overline{B})$. Recall that $\overline{B}$ acts primitively and faithfully on the set of right cosets of $\overline{G}$ in $\overline{B}$, and
\[
|\overline{B}:\overline{G}|=|B:G|=|GB_\alpha:G|=|B_\alpha:G_\alpha|.
\]
As $B_\alpha$ is a $\{2,3\}$-group, we obtain $|\overline{B}:\overline{G}|=2^\ell3^k$ for some nonnegative integers $\ell$ and $k$. If $|B_\alpha|$ is divisible by $3$, then $B_\alpha$ is $2$-transitive on the neighborhood of $\alpha$ in $\Gamma$ and so~\cite[Lemma~2.3]{FLX2004} shows that $|B_\alpha|$ divides $2^43^6$. Consequently, either $\ell\leqslant3$ and $1\leqslant k\leqslant6$, or $k=0$.

Since $K$ is normal in $B$, we deduce from~\eqref{Eqn1} that $K\neq P$. Hence $K<P$ and so $\overline{P}$ is a nontrivial $p$-group. This shows that $\overline{G}$ is a nontrivial $p$-group extended by the nonabelian simple group $G/P\cong T$. Then as $\overline{G}$ is a point stabilizer of the primitive group $\overline{B}$ of degree $|\overline{B}:\overline{G}|=2^\ell3^k$, it follows from~\cite{LL2014} that $k=0$ and $\overline{B}$ is an affine primitive group of degree $2^\ell$. Hence $|\overline{H}|=2^\ell$, and so $H$ is a $\{2,p\}$-group.

Let $R=PH$. Then $R$ is a $\{2,p\}$-group and thus is solvable. Moreover, $R\unlhd B$, and as $P\leqslant G$, we have $B=HG=HPG=RG$. Hence
\[
B/R=RG/R\cong G/(G\cap R)\cong(G/P)/((G\cap R)/P).
\]
Since $G/P\cong T$ is simple, it follows that either $B/R=1$ or $B/R\cong T$. Clearly, $B\neq R$ as $R$ is solvable and $B$ is nonsolvable. Thus $B/R\cong T$ is nonabelian simple, which implies
\[
R=\Rad(B).
\]

Consider the quotient graph $\Gamma/R$. Let $M$ be the kernel of $B$ acting on $V(\Gamma/R)$. Then $M=RM_\alpha$. Since $M_\alpha\leqslant B_\alpha$ is a $2$-group, we see that $M$ is a $\{2,p\}$-group as $R$ is a $\{2,p\}$-group. Accordingly, $M$ is solvable, and so $M\leqslant\Rad(B)=R$. This shows that the action of $B/R$ on $V(\Gamma/R)$ is faithful. Suppose that $R_\alpha\neq1$. Then the number of orbits of $R_\alpha$ on the neighborhood of $\alpha$ in $\Gamma$ is less than $4$. It follows that the valency of $\Gamma/R$ is less than $4$ and so must be $1$ or $2$ as it divides $4$. Thereby we conclude that $B/R\leqslant\Aut(\Gamma/R)$ is solvable, a contradiction. Thus $R_\alpha=1$.

As $R_\alpha=1$, the orbits of $R$ on $V(\Gamma)$ have size $|R|$. Since $R$ is normal in $B$ and $B$ is transitive on $V(\Gamma)$, it follows that $|R|$ divides $|V(\Gamma)|$. Hence $|R|$ divides $|G|$ as $G$ is transitive on $V(\Gamma)$. In particular, $|R|_2\leqslant|G|_2$. As $|R|_2=|PH|_2=|H|_2=|H/K|_2=2^\ell$ and $|G|_2=|G/P|_2=|T|_2$, we then obtain $2^\ell\leqslant|T|_2$. This in conjunction with~\eqref{Eqn3} implies that $p<|G:B|=|\overline{B}:\overline{G}|=2^\ell\leqslant|T|_2$, contradicting our choice of $p$.
\end{proof}

\section{Examples}\label{sec:1}

Recall the standard construction of the \emph{coset graph} $\Cos(X,Y,S)$ for a group $X$ with a subgroup $Y$ and an inverse-closed subset $S$ of $X\setminus Y$ such that $S$ is finite union of double cosets of $Y$ in $X$. Such a graph has vertex set $[X:Y]$, the set of right cosets of $Y$ in $X$, and edge set $\{\{Yt,Yst\}\mid t\in X,\ s\in S\}$. It is easy to see that $\Cos(X,Y,S)$ has valency $|S|/|Y|$, and $X$ acts by right multiplication on $[X:Y]$ as a group of automorphisms of $\Cos(X,Y,S)$. Moreover, $\Cos(X,Y,S)$ is connected if and only if $X=\langle Y,S\rangle$.

\subsection{Example $\D_8$}\label{ExD8}
Let $G=\A_{10}$ and
\[
H=\langle(1, 2, 3, 4)(5, 6, 7, 8),(1, 4)(2, 3)(5, 7)(9, 10)\rangle<G.
\]
Clearly, $H\cong\D_8$. Let
\[
s=(1, 8, 10)(2, 7, 4, 6, 9, 3, 5)\in G.
\]
It can be checked immediately by the computational algebra system \magma~\cite{BCP1997} that
\[
\langle H,s\rangle=G,\quad |H:s^{-1}Hs|=2\quad\text{and}\quad s^{-1}\notin HsH.
\]
Then letting
\begin{equation}\label{GraphSigma}
\Sigma=\Cos(G,H,H\{s,s^{-1}\}H),
\end{equation}
we see that
\begin{itemize}
\item $\Sigma$ is a connected tetravalent graph;
\item $G$ acts faithfully and half-arc-transitively on $\Sigma$;
\item the vertex stabilizer in $G$ is $H\cong\D_8$.
\end{itemize}

\subsection{Example $\mathcal{H}_7\times\C_2$}\label{ExSmallConcentric}
Let
\begin{align*}
H=\langle a_1,\dots,a_8\mid\ &a_i^2=1\text{ for }i\leqslant8,\ (a_ia_j)^2=1\text{ for }|i-j|\leqslant5,\\
&(a_1a_7)^2=a_3,\ (a_2a_8)^2=a_4,\ (a_1a_8)^2=a_6\rangle.
\end{align*}
Then $H=\langle a_1,a_2,a_3,a_4,a_6,a_7,a_8\rangle\times\langle a_5\rangle\cong\mathcal{H}_7\times\C_2$. Let
\[
B=\langle a_1,\dots,a_7\rangle,\quad C=\langle a_2,\dots,a_8\rangle
\]
and let $\varphi:B\to C$ be the group isomorphism defined by
\[
a_i^\varphi=a_{i+1}\quad\text{for }\ i=1,\dots,7.
\]
Then $H=B\cup a_8B=C\cup a_1a_2C$. Let $x$ be the permutation on $H$ defined by
\[
b^x=b^\varphi\quad\text{and}\quad(a_8b)^x=a_1a_2b^\varphi\quad\text{for }\ b\in B.
\]
Denote the right regular representation of $H$ by $R:H\to\Sym(H)$. It can be checked easily by the computational algebra system \magma~\cite{BCP1997} that
\[
\langle R(H),x\rangle=\Alt(H),\quad x^{-1}R(H)x=R(C)\quad\text{and}\quad x^{-1}\notin R(H)xR(H).
\]
Then letting
\begin{equation}\label{GraphPi}
\Pi=\Cos(\Alt(H),R(H),R(H)\{x,x^{-1}\}R(H)),
\end{equation}
we see that
\begin{itemize}
\item $\Pi$ is a connected tetravalent graph;
\item $\Alt(H)$ acts faithfully and half-arc-transitively on $\Pi$;
\item the vertex stabilizer in $\Alt(H)$ is $R(H)\cong H\cong\mathcal{H}_7\times\C_2$.
\end{itemize}

\subsection{Example $\D_8\times\C_2^{m-3}$}\label{ExInfiniteFamily}
Let $m\geqslant4$ be an integer,
\[
H=\langle a,b\mid a^4=b^2=(ab)^2=1\rangle\times\langle c_1\rangle\times\dots\times\langle c_{m-3}\rangle,
\]
where $c_1,\dots,c_{m-3}$ are involutions. Clearly, $H\cong\D_8\times\C_2^{m-3}$. Let $h=a\prod_{i=0}^{\lceil(m-5)/2\rceil}c_{2i+1}$ and
\[
K=\langle a^2,b,c_1,\dots,c_{m-3}\rangle
=\langle a^2\rangle\times\langle b\rangle\times\langle c_1\rangle\times\dots\times\langle c_{m-3}\rangle.
\]
Then $K\cong\C_2^{m-1}$ and $H=K\cup aK=K\cup hK$. For convenience, put $c_i=1$ for $i\leqslant0$. Define $x\in\Aut(H)$ by letting
\[
a^x=a^{-1},\quad b^x=ab,\quad c_{2i+1}^x=c_{2i+1}\quad\text{and}\quad c_{2i+2}^x=a^2c_{2i+1}c_{2i+2}
\]
for $0\leqslant i\leqslant\lfloor(m-5)/2\rfloor$ and letting $c_{m-3}^x=a^2c_{m-3}$ in addition if $m$ is even. Define $\tau\in\Aut(K)$ by letting
\[
(a^2)^\tau=b,\quad b^\tau=a^2,\quad c_{2i+1}^\tau=c_{2i-1}c_{2i}c_{2i+2}\quad\text{and}\quad c_{2i+2}^\tau=c_{2i-1}c_{2i}c_{2i+1}
\]
for $0\leqslant i\leqslant\lfloor(m-5)/2\rfloor$ and letting $c_{m-3}^\tau=c_{m-3}$ in addition if $m$ is even.

Note that $x$ and $\tau$ are automorphisms of $H$ and $K$ respectively as the images of generators under $x$ and $\tau$ are generators of $H$ and $K$ satisfying the defining relations. Let $y$ be the permutation of $H$ such that $g^y=g^\tau$ and
\[
(hg)^y=
\begin{cases}
hg^\tau\quad&\text{if $m$ is odd,}\\
hg^\tau c_{m-3}\quad&\text{if $m$ is even},
\end{cases}
\]
for $g\in K$. Denote the right regular representation of $H$ by $R:H\to\Sym(H)$. It follows from~\cite[Lemmas~2.1~and~2.3]{CXZ2018} that $x$ and $y$ are both involutions and $\langle x,y,R(H)\rangle\leqslant\Alt(H)$. Let
\begin{equation}\label{GraphSigmam}
\Sigma_m=\Cos(\Alt(H),R(H),R(H)\{xy,yx\}R(H)).
\end{equation}

Fix the notation of $H$, $R$, $x$ and $y$ in this subsection, and let
\[
z=
\begin{cases}
R(h)yR(h^{-1})\quad&\text{if $m$ is odd,}\\
R(h)yR(h^{-1}c_{m-3})\quad&\text{if $m$ is even.}
\end{cases}
\]
According to~\cite[Lemma~2.1]{CXZ2018}, the permutation $z$ is an involution. Since $z\in\langle x,y,R(H)\rangle$, it follows from $\langle x,y,R(H)\rangle\leqslant\Alt(H)$ that $z\in\Alt(H)$. As $x$, $y$ and $z$ all fix $1\in H$, we may also view them as elements of $\Alt(H\setminus\{1\})$ when they cause no confusion. Use $\sqcup$ to denote a disjoint union of sets.

\begin{lemma}\label{LemDoubleCoset}
The following hold:
\begin{enumerate}[{\rm (a)}]
\item $R(H)xyR(H)=R(H)xy\sqcup R(H)xz$;
\item $R(H)yxR(H)=R(H)yx\sqcup R(H)zx$;
\item $R(H)\{xy,yx\}R(H)=R(H)xy\sqcup R(H)yx\sqcup R(H)xz\sqcup R(H)zx$.
\end{enumerate}
\end{lemma}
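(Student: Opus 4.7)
The plan is to reduce all three assertions to the single structural equality $R(H) \cap yR(H)y^{-1} = R(K)$, supplemented by direct evaluations at suitably chosen elements of $H$.

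I would first exploit that $x$ is an automorphism of $H$, so $R(g)x = xR(g^{x})$ for every $g \in H$ and hence $x$ normalises $R(H)$. Writing $\epsilon = 0$ when $m$ is odd and $\epsilon = 1$ when $m$ is even, so that $z = R(h)\, y\, R(h^{-1}c_{m-3}^{\epsilon})$, this at once yields
\[
xz = R(h^{x^{-1}}) \cdot xy \cdot R(h^{-1}c_{m-3}^{\epsilon}) \in R(H)\, xy\, R(H),
\]
\[
zx = R(h) \cdot yx \cdot R((h^{-1}c_{m-3}^{\epsilon})^{x}) \in R(H)\, yx\, R(H),
\]
so the four right cosets of $R(H)$ on the right of (c) are already contained in the corresponding double cosets.

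Next I would count the right cosets in each double coset. The number of right cosets of $R(H)$ inside $R(H)\, t\, R(H)$ is $|R(H)|/|R(H) \cap t^{-1}R(H)t|$, and using that $x$ normalises $R(H)$ together with $y^{2}=1$, for $t \in \{xy, yx\}$ this intersection is $R(H) \cap yR(H)y^{-1}$ in both cases. The containment $R(K) \subseteq R(H) \cap yR(H)y^{-1}$ is immediate, because $y|_{K}= \tau \in \Aut(K)$: a direct evaluation on $K$ and on $hK$ using the formula for $y$ shows $yR(k)y^{-1} = R(k^{\tau^{-1}})$ for every $k \in K$. The reverse inclusion is the heart of the proof: for $g = hg_{0} \in H \setminus K$, computing $y^{-1}R(g)y$ on a generic element of $K$ forces any equality $y^{-1}R(g)y = R(g')$ to entail that the automorphism $\tau$ of $K$ commutes with the inner automorphism $\iota_{h}$ of $K$. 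A direct check on generators, using $(a^{2})^{\tau}=b$, $\iota_{h}(a^{2})=a^{2}$ and $\iota_{h}(b)=a^{2}b$ (the last from $bab=a^{-1}$ in $\D_{8}$), rules this out. Hence $R(H) \cap yR(H)y^{-1} = R(K)$ has index $2$ in $R(H)$, and each of the two double cosets consists of exactly two right cosets of $R(H)$.

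The previous two steps yield (a) and (b) modulo distinctness of the two right cosets in each decomposition; both distinctness statements reduce, using that $x$ normalises $R(H)$, to $R(H)y \ne R(H)z$, and the definition of $z$ together with the key equality turns this into the trivial observation $h^{-1}c_{m-3}^{\epsilon} \notin K$. For (c) one additionally needs the two double cosets to be disjoint, i.e.\ $R(H)yx \notin \{R(H)xy, R(H)xz\}$; since a nontrivial right translation acts fixed-point-freely on $H$ and $x$, $y$, $z$ all fix $1$, each such equality reduces to an equality of permutations, which one refutes by evaluating both sides at a single element of $H \setminus K$ (for instance $b$, where already $b^{yx} = a^{2} \in K$ but $b^{xy} \in hK$). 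The hard part is entirely concentrated in the identification $R(H) \cap yR(H)y^{-1} = R(K)$: the inclusion $\supseteq$ is formal from $y|_{K} \in \Aut(K)$, but the inclusion $\subseteq$ rests on the non-commutation of $\tau$ with conjugation by $h$ on $K$, and that is where the technical content of the argument lives.
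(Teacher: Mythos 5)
Your proposal is correct, and its skeleton coincides with the paper's: both arguments hinge on showing that $yR(H)y\cap R(H)=R(K)$ has index $2$ in $R(H)$, deduce that each double coset $R(H)xyR(H)$ and $R(H)yxR(H)$ consists of exactly two right cosets, observe that $xz\in R(H)xyR(H)$ and $zx\in R(H)yxR(H)$ from the definition of $z$ and the fact that $x$ normalises $R(H)$, and finish by a counting argument once the four right cosets are known to be pairwise distinct. The one substantive divergence is how the crucial inequality $yR(H)y\cap R(H)\neq R(H)$ is obtained. The paper gets it indirectly: if $y$ normalised $R(H)$ then $\langle x,y,R(H)\rangle\leqslant\Nor_{\Alt(H)}(R(H))<\Alt(H)$, contradicting \cite[Lemmas~3.6~and~3.11]{CXZ2018}; so the nonnormality results of that earlier paper carry the weight. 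You instead prove the reverse inclusion $yR(H)y\cap R(H)\subseteq R(K)$ by a direct computation, reducing a putative equality $yR(g)y=R(g')$ with $g\in hK$ to the commutation of $\tau$ with the inner automorphism $\iota_h$ of $K$, which fails already at $a^2$ since $(a^2)^\tau=b$, $\iota_h(a^2)=a^2$ and $\iota_h(b)=a^2b$. This is self-contained and more elementary, and it localises the technical content precisely; the cost is that you must carry out the case analysis over $K$ and $hK$ and over the parity of $m$ (harmless here, since the witness lives in $\langle a^2,b\rangle$ and the $c_i$ are central), whereas the paper's citation disposes of it in one line. Your treatment of distinctness also differs mildly: the paper evaluates all four permutations at $a^2$, while you reduce the within-double-coset distinctness to $h\notin K$ via the key equality and settle the cross-double-coset disjointness by point evaluation. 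One small slip: $b$ lies in $K$, not in $H\setminus K$, so your parenthetical ``element of $H\setminus K$'' is misstated, though the computation $b^{yx}=a^2\in K$ versus $b^{xy}\in hK$ is correct and does what you need; to make part~(c) airtight you should record such a separating evaluation for each of the four required inequalities, as the paper does in one stroke with $a^2$.
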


\begin{proof}
Note that $x$, $y$ and $z$ are all involutions. It is straightforward to verify that $x$ and $y$ normalize $R(H)$ and $R(K)$, respectively. If $yR(H)x\cap R(H)=R(H)$, then
\[
\langle x,y,R(H)\rangle\leqslant\Nor_{\Alt(H)}(R(H))<\Alt(H),
\]
contrary to~\cite[Lemmas~3.6~and~3.11]{CXZ2018}. Thus
\[
yxR(H)xy\cap R(H)=yR(H)y\cap R(H)\neq R(H).
\]
Since
\begin{align*}
yxR(H)xy\cap R(H)&=yR(H)y\cap R(H)\geqslant yR(K)y\cap R(K)=R(K)
\end{align*}
and $R(K)$ has index $2$ in $R(H)$, we then deduce that $yxR(H)xy\cap R(H)=R(K)$. In particular, $yxR(H)xy$ has index $2$ in $R(H)$, whence
\[
\frac{|R(H)xyR(H)|}{|R(H)|}=\frac{|R(H)|}{|yxR(H)xy\cap R(H)|}=2.
\]
Consequently,
\begin{equation}\label{Eqn6}
|R(H)yxR(H)|=|(R(H)yxR(H))^{-1}|=|R(H)xyR(H)|=2|R(H)|
\end{equation}
and thus
\begin{equation}\label{Eqn7}
|R(H)\{xy,yx\}R(H)|\leqslant|R(H)xyR(H)|+|R(H)yxR(H)|=4|R(H)|.
\end{equation}
Note from the definition of $z$ that
\[
xz\in xR(H)yR(H)=R(H)xyR(H).
\]
Hence $R(H)xz\subseteq R(H)xyR(H)$ and $R(H)zx\subseteq R(H)yxR(H)$. It is direct to verify that
\[
(a^2)^{xy}=b,\quad(a^2)^{yx}=ab,\quad(a^2)^{xz}=a^2b,\quad(a^2)^{zx}=a^3b,
\]
which shows that $xy$, $yx$, $xz$ and $zx$ are pairwise distinct. Then as $xy,yx,xz,zx\in\Alt(H)_1$ and $\Alt(H)_1$ forms a right transversal of $R(H)$ in $\Alt(H)$, it follows that $R(H)xy$, $R(H)yx$, $R(H)xz$ and $R(H)zx$ are pairwise disjoint. Therefore,
\[
R(H)xyR(H)\supseteq R(H)xy\sqcup R(H)xz,
\]
\[
R(H)yxR(H)\supseteq R(H)yx\sqcup R(H)zx
\]
and
\[
R(H)\{xy,yx\}R(H)\supseteq R(H)xy\sqcup R(H)yx\sqcup R(H)xz\sqcup R(H)zx.
\]
This combined with~\eqref{Eqn6} and~\eqref{Eqn7} yields the lemma.
\end{proof}

\begin{proposition}\label{PrpInfiniteFamily}
Let $m\geqslant4$ be an integer and let $\Sigma_m$ be the graph defined in~$\eqref{GraphSigmam}$. Then $\Sigma_m$ is a connected tetravalent graph admitting a half-arc-transitive action of $\A_{2^m}$ with vertex stabilizer $\D_8\times\C_2^{m-3}$.
\end{proposition}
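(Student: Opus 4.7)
The plan is to verify, in turn, the four properties asserted of $\Sigma_m=\Cos(\Alt(H),R(H),S)$ with $S=R(H)\{xy,yx\}R(H)$: it has valency $4$, its vertex stabilizer is $R(H)\cong H\cong\D_8\times\C_2^{m-3}$, the $\A_{2^m}=\Alt(H)$-action is faithful and half-arc-transitive, and $\Sigma_m$ is connected. Three of these follow directly from the construction together with Lemma~\ref{LemDoubleCoset}: the valency is $|S|/|R(H)|=4$ by Lemma~\ref{LemDoubleCoset}(c); the stabilizer of the base vertex under the right-multiplication action is exactly $R(H)$, which is isomorphic to $H$ by construction; and since $|H|=2^m\geqslant 16$ for $m\geqslant 4$, the group $\A_{2^m}$ is simple, so the core of $R(H)$ in $\Alt(H)$ is trivial and the action is faithful.

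For half-arc-transitivity, the plan is to apply the standard criterion for coset graphs: $\Alt(H)$ acts edge-transitively but not arc-transitively on $\Sigma_m$ precisely when $S$ is the union of exactly two distinct double cosets of $R(H)$ that are mutually inverse. Since $x,y$ are involutions, $(xy)^{-1}=yx$, so $S$ is inverse-closed and the double cosets $R(H)xyR(H)$ and $R(H)yxR(H)$ are mutually inverse. Parts~(a) and~(b) of Lemma~\ref{LemDoubleCoset} show that each has cardinality $2|R(H)|$, while part~(c) shows their union has cardinality $4|R(H)|$; hence they are disjoint and so distinct. Under the stabilizer $R(H)$, the four out-arcs at the base vertex split into two orbits, one per double coset, and arc-reversal interchanges these two orbits, yielding edge-transitivity and ruling out arc-transitivity.

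The main obstacle is connectedness, which amounts to proving $\langle R(H),xy,yx\rangle=\Alt(H)$. The plan here is to leverage the identity $\langle R(H),x,y\rangle=\Alt(H)$, which is essentially what underwrites the proof of Lemma~\ref{LemDoubleCoset} via~\cite[Lemmas~3.6~and~3.11]{CXZ2018}, and to show that $\langle R(H),xy,yx\rangle$ already contains $x$ on its own, for then $y=x\cdot xy$ lies there too. One concrete route is to find a short word in $xy$, $yx$, and elements of $R(H)$ that evaluates to $x$ by exploiting the explicit formulas for the actions of $x$ and $\tau$ (and hence $y$) on $H$ and $K$ given earlier in the subsection; the involution $z$ introduced just before Lemma~\ref{LemDoubleCoset}, which already mediates between $R(H)yR(H)$-type double cosets, is a natural intermediary. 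If this direct approach proves recalcitrant, an alternative is a structural argument that enumerates the possible proper overgroups of $R(H)$ inside $\A_{2^m}$ and rules out each candidate by observing that it cannot simultaneously contain $R(H)$ and the specific permutation $xy$. I expect the main technical work in the proof to lie precisely in this last step.
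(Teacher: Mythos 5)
Your treatment of the valency, the vertex stabilizer, faithfulness, and half-arc-transitivity is correct and matches what the paper extracts from Lemma~\ref{LemDoubleCoset}: in particular, your cardinality count showing that $R(H)xyR(H)$ and $R(H)yxR(H)$ are disjoint is precisely how the paper obtains the two $R(H)$-orbits on the neighbourhood of the base vertex. The gap is in the connectedness step, which you correctly reduce to $\langle R(H),xy,yx\rangle=\Alt(H)$ but then leave unproved. Note first that $\langle xy,yx\rangle=\langle xy\rangle$ is cyclic (since $x$ and $y$ are involutions, $yx=(xy)^{-1}$), so any successful argument must genuinely exploit $R(H)$-conjugation; neither of your two suggested routes does this. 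A ``short word in $xy$, $yx$ and elements of $R(H)$ evaluating to $x$'' is not exhibited, and there is no reason to expect one to emerge from inspecting the formulas for $x$ and $\tau$; and enumerating the proper overgroups of the regular subgroup $R(H)\cong\D_8\times\C_2^{m-3}$ inside $\A_{2^m}$ is a classification problem considerably harder than the statement being proved.

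The missing idea is the following. The connection set is the full double-coset union, which by Lemma~\ref{LemDoubleCoset}(c) contains $xz$ and $zx$ in addition to $xy$ and $yx$; since $x$, $y$, $z$ all fix $1\in H$ and $\Alt(H)_1=\Alt(H\setminus\{1\})$ is a right transversal of the regular subgroup $R(H)$, the map $g\mapsto R(H)g$ identifies $\Sigma_m$ with $\Cay(\Alt(H\setminus\{1\}),\{xy,yx,xz,zx\})$, so connectedness reduces to the generation statement $\langle xy,xz\rangle=\Alt(H\setminus\{1\})$. The paper obtains this by importing from~\cite[Lemmas~4.1~and~4.2]{CXZ2018} that $\langle x,y,z\rangle=\Alt(H\setminus\{1\})$ and then observing that the even-word subgroup $\langle xy,xz,yz\rangle=\langle xy,xz\rangle$ has index at most $2$ in $\langle x,y,z\rangle$, hence equals $\Alt(H\setminus\{1\})$ by simplicity. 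Without this external input (or an equivalent substitute), your connectedness argument does not close.
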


\begin{proof}
Let $S=\{xy,yx,xz,zx\}\subset\Alt(H\setminus\{1\})$. According to~\cite[Lemmas~4.1~and~4.2]{CXZ2018}, $\Cay(\Alt(H\setminus\{1\}),\{x,y,z\})$ is connected, which means that $\Alt(H\setminus\{1\})=\langle x,y,z\rangle$. Consider the subgroup $W$ of even words of the generators $x$, $y$ and $z$ in $\Alt(H\setminus\{1\})$. Then $W$ has index $1$ or $2$ in $\Alt(H\setminus\{1\})$. Since $\Alt(H\setminus\{1\})$ is simple, it follows that $W=\Alt(H\setminus\{1\})$. Moreover, as $x$, $y$ and $z$ are involutions, we have
\[
W=\langle xy,xz,yz\rangle=\langle xy,xz,(xy)^{-1}(xz)\rangle=\langle xy,xz\rangle.
\]
Thus $\Alt(H\setminus\{1\})=\langle xy,xz\rangle$, and so $\Cay(\Alt(H\setminus\{1\}),S)$ is connected.

Let $\varphi\colon g\mapsto R(H)g$ be the mapping from $\Alt(H\setminus\{1\})$ to the vertex set of $\Sigma_m$. Since $\Alt(H\setminus\{1\})$ forms a right transversal of $R(H)$ in $\Alt(H)$, $\varphi$ is bijective. Moreover, for any $u$ and $v$ in $\Alt(H\setminus\{1\})$, $u$ is adjacent to $v$ in $\Cay(\Alt(H\setminus\{1\}),S)$ if and only if
\[
vu^{-1}\in S=\{xy,yx,xz,zx\},
\]
which is equivalent to
\[
R(H)vu^{-1}\in\{R(H)xy,R(H)yx,R(H)xz,R(H)zx\}.
\]
By Lemma~\ref{LemDoubleCoset}, this means that $u$ and $v$ are adjacent in $\Cay(\Alt(H\setminus\{1\}),S)$ if and only if
\[
R(H)vu^{-1}\subseteq R(H)S=R(H)\{xy,yx\}R(H),
\]
or equivalently, $R(H)u$ is adjacent to $R(H)v$ in $\Sigma_m$. Therefore, $\varphi$ is a graph isomorphism from $\Cay(\Alt(H\setminus\{1\}),S)$ to $\Sigma_m$. As a consequence, $\Sigma_m$ is a connected tetravalent graph.


Finally, Lemma~\ref{LemDoubleCoset} implies that $\{R(H)xy,R(H)xz\}$ and $\{R(H)yx,R(H)zx\}$ are the two orbits of the group $R(H)$ acting on the neighborhood of the vertex $R(H)$ in $\Sigma_m$. Thereby we conclude that the right multiplication action of $\Alt(H)$ on $\Sigma_m$ is half-arc-transitive. Since $R(H)\cong H\cong\D_8\times\C_2^{m-3}$ and $\Alt(H)\cong\A_{|H|}=\A_{2^m}$, this completes the proof.
\end{proof}

\section{Proofs of Theorem~\ref{ThmInfiniteFamily} and Theorem~\ref{ThmSmallConcentric}}\label{sec:4}

\begin{proof}[Proof of Theorem~$\ref{ThmInfiniteFamily}$]
Let $m\geqslant4$ be an integer and let $\Sigma_m$ be the graph defined by~\eqref{GraphSigmam}. Then as Proposition~\ref{PrpInfiniteFamily} asserts, $\Sigma_m$ is a connected tetravalent graph admitting a half-arc-transitive action of $\A_{2^m}$ with vertex stabilizer $\D_8\times\C_2^{m-3}$. Thus, by Theorem~\ref{ThmCover}, there exist infinitely many finite connected tetravalent half-arc-transitive graphs with vertex stabilizer $\D_8\times\C_2^{m-3}$.

Let $m\geqslant7$ be an integer and let $\Gamma_m$ be the graph defined in~\cite[Section~$3$]{Xia}. Then~\cite[Theorem~$1.2$]{Xia} asserts that  $\Gamma_m$ is a connected tetravalent half-arc-transitive graph whose automorphism group is isomorphic to $\A_{2^{m}}$ with vertex stabilizer $\D_8\times \D_8\times\C_2^{m-6}$. Thus, by Theorem~\ref{ThmCover}, there exist infinitely many finite connected tetravalent half-arc-transitive graphs with vertex stabilizer $\D_8\times\D_8\times\C_2^{m-6}$.
\end{proof}

\begin{proof}[Proof of Theorem~$\ref{ThmSmallConcentric}$]
According to~\cite[Theorem~1.1]{Marusic2005} and Theorem~\ref{ThmInfiniteFamily}, each of the groups
\[
\C_2^m\text{ with }1\leqslant m\leqslant8,\quad\D_8\times\C_2^{m-3}\text{ with }4\leqslant m\leqslant8,\quad\D_8^2\times\C_2^{m-6}\text{ with }7\leqslant m\leqslant8
\]
is the vertex stabilizer of infinitely many finite connected tetravalent half-arc-transitive graphs. By Example~\ref{ExD8},~\cite{CM2003},~\cite{Spiga2016} and Example~\ref{ExSmallConcentric}, each of the groups
\[
\D_8,\quad\D_8\times\D_8,\quad\mathcal{H}_7,\quad\mathcal{H}_7\times\C_2
\]
is the vertex stabilizer of a half-arc-transitive nonabelian simple group acting on a finite connected tetravalent graph. This implies that each of these groups is the vertex stabilizer of infinitely many finite connected tetravalent half-arc-transitive graphs by Theorem~\ref{ThmCover}. Hence every group in the list of Theorem~\ref{ThmMN2001} is the vertex stabilizer of infinitely many finite connected tetravalent half-arc-transitive graphs, and so Theorem~\ref{ThmSmallConcentric} is true.
\end{proof}

\section{Concluding remarks}\label{sec:5}

The work in this paper was inspired by some new ideas developed in~\cite{Xia}. We observe that there are many papers that recently keep the interest on half-arc-transitive graphs very high. For instance, in~\cite{Zhou2016}, the author made significant progress in the study of tetravalent non-normal half-arc-transitive Cayley graphs of prime power order, and answered two very important problems related to this topic. Similarly, in~\cite{ZZ2017}, the authors answered a long-standing problem regarding the existence of half-arc-transitive graphs of order twice a prime square.

The reader may have noticed that our key ingredient in this paper is~\cite{PS2019}. The main results of~\cite{PS2019} are rather general and apply to most actions of groups on graphs. Our application of~\cite{PS2019} in our work is rather successful, in our opinion, as we consider normal covers of graphs
\begin{center}
$(\dag)\qquad$  admitting a \textit{nonabelian simple} group of automorphisms.
\end{center}
Under this extra hypothesis, the results in~\cite{PS2019} can be combined with rather strong group-theoretic results based on CFSG and, as a consequence, we are able to obtain infinite families of graphs having exotic vertex stabilizers. As far as we are aware, this type of constructions is a novelty.

In light of the following conjecture originating from D\v{z}ambi\'c and Jones~\cite{DJ2013} and supported by Conder (see~\cite[Section~2]{Conder2019}), the hypothesis~$(\dag)$ does not seem strong. This suggests that Theorem~\ref{ThmCover} could be applied to show the existence of tetravalent half-arc-transitive graphs with other vertex stabilizers as well, and thus sheds light on classifying the vertex stabilizers of finite tetravalent half-arc-transitive graphs.

\begin{conjecture}[Conder-D\v{z}ambi\'c-Jones]
If $A$ and $B$ are finite groups and $C$ is a subgroup of $A\cap B$ of index at least $2$ in $A$ and at least $3$ in $B$, then all but finitely many alternating groups are homomorphic images of the amalgamated free product $A \ast_C B$.
\end{conjecture}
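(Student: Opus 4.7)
The plan is to produce, for each sufficiently large $n$, a surjective homomorphism $\phi\colon A\ast_C B\to\Alt(n)$. By the universal property of the amalgamated free product, giving such a $\phi$ amounts to giving two homomorphisms $\alpha\colon A\to\Alt(n)$ and $\beta\colon B\to\Alt(n)$ which agree on $C$ and together generate $\Alt(n)$. Equivalently, via Bass--Serre theory, one needs an $n$-element $C$-set with compatible extensions to an $A$-action and a $B$-action whose joint image is the full alternating group. The index hypotheses $[A:C]\geqslant 2$ and $[B:C]\geqslant 3$ ensure that the amalgam is infinite, so the problem is not obstructed at the level of indices, but they also furnish the basic pieces from which finite representations may be assembled.

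First, I would construct a sufficiently flexible family of finite quotients. Starting from a base $(A,B)$-compatible $C$-set --- for instance, the disjoint union of the coset actions of $A$ on $A/C$ and $B$ on $B/C$ glued along a common $C$-orbit --- one can enlarge the representation by adjoining further $C$-orbits on which the $A$- and $B$-actions are coordinated via free extensions. Varying the number and structure of the added orbits, and exploiting the flexibility that $[A:C]\geqslant 2$ and $[B:C]\geqslant 3$ provide for such enlargements, one obtains permutation representations of $A\ast_C B$ realizing a set of degrees $n$ of density one. This is in the spirit of Conder's coset-diagram surgery for triangle groups and Everitt's work on surface subgroups of free products.

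Second, one has to ensure that among these representations the image can be forced to equal $\Alt(n)$, rather than some proper subgroup. The strategy here combines (i) an explicit seed: a single base pair $(\alpha_0,\beta_0)$ realizing $\Alt(n_0)$ for some $n_0$, which should be findable by a finite computation once the presentations of $A$, $B$, and $C$ are fixed; (ii) a surgery step that splices an $\Alt$-image representation with compatible auxiliary pieces without losing the $\Alt$-image property; and (iii) Liebeck--Shalev-type probabilistic bounds on the number of compatible pairs whose image lies in a proper subgroup. The main obstacle, in my view, is the joint control imposed by the condition $\alpha|_C=\beta|_C$: this amalgamation constraint interferes with the probabilistic arguments that are standard for ordinary free products, and it interacts subtly with primitivity and parity of the image. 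Overcoming it would likely require a careful O'Nan--Scott analysis of proper subgroups of $\Alt(n)$ together with a refined counting of $C$-compatible pairs, adapted to the Euler-characteristic data of the associated graph of groups.
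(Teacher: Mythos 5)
The first thing to say is that the paper does not prove this statement: it is recorded there explicitly as an open conjecture (attributed to D\v{z}ambi\'c--Jones and supported by Conder) and is used only as motivation for the hypothesis~$(\dag)$ that the base graph admits a nonabelian simple group of automorphisms. So there is no proof in the paper to compare against, and your text must stand on its own. As it stands, it is a research programme rather than a proof, and each of its load-bearing steps contains a genuine gap.

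Concretely: (1) Your ``seed'' step asserts that a base pair $(\alpha_0,\beta_0)$ with image $\Alt(n_0)$ ``should be findable by a finite computation once the presentations are fixed.'' But the conjecture quantifies over \emph{all} finite triples $(A,B,C)$ satisfying the index conditions, so a per-instance computation cannot yield the theorem, and there is no a priori guarantee that any alternating quotient exists for a given amalgam --- producing even one is already essentially the content of the conjecture. (2) The surgery step is not defined. Conder's coset-diagram compositions work for specific groups (notably the $(2,3,7)$ triangle group) because explicit diagrams with compatible ``handles'' were constructed by hand; the general operation you invoke --- adjoining $C$-orbits ``coordinated via free extensions'' --- is not shown to preserve transitivity of the resulting $\langle \alpha(A),\beta(B)\rangle$-action, let alone to keep the image out of every proper subgroup of $\Alt(n)$ (imprimitive, intransitive, or primitive non-alternating), which is the entire difficulty. (3) You correctly identify that the constraint $\alpha|_C=\beta|_C$ breaks the Liebeck--Shalev-style counting that works for ordinary free products, and you name this as the main obstacle --- but you offer no way around it. Without resolving at least one of these three points, nothing in the proposal rules out that every compatible pair has image in a proper subgroup, and the statement remains open. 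The framing via the universal property of $A\ast_C B$ and Bass--Serre theory is the right starting point, but none of the gaps is closed.
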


\begin{remark}
In fact, Marston Conder has a stronger conjecture:
\end{remark}

\begin{conjecture}[\cite{Conder2019}]
Let $A$ and $B$ be finite groups, let $C$ be a subgroup of $A\cap B$ of index at least $2$ in $A$ and at least $3$ in $B$, and let $K$ be the core of $C$ in the amalgamated free product $A \ast_C B$. Then all but finitely many alternating groups occur as the image of $A \ast_C B$ under some homomorphism that takes $A$ and $B$ to subgroups (of the alternating group) isomorphic to $A/K$ and $B/K$ respectively.
\end{conjecture}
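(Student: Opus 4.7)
The plan is to build, for each sufficiently large $n$, a homomorphism $\varphi\colon A\ast_C B\to\A_n$ whose image is $\A_n$ and whose restrictions to $A$ and $B$ have kernel exactly $K$. Since $K$ is normal in $A\ast_C B$ and contained in $C$, any such $\varphi$ factors through $\widetilde G:=(A\ast_C B)/K$; by the universal property, $\widetilde G$ is itself the amalgamated free product $(A/K)\ast_{C/K}(B/K)$, in which the amalgamated subgroup $C/K$ has trivial core. Thus without loss of generality I may assume $K=1$, and the task reduces to producing, for every large $n$, transitive permutation actions $\alpha\colon A\to\Sym(n)$ and $\beta\colon B\to\Sym(n)$ that agree on $C$, are individually faithful, and whose joint image is $\A_n$. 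The hypotheses $[A:C]\geqslant 2$ and $[B:C]\geqslant 3$ furnish a quotient map $A\ast_C B\twoheadrightarrow \C_2\ast \C_3$, and the classical $(2,3)$-generation of $\A_n$ for all but finitely many $n$ (the base case of the conjecture) already supplies plenty of alternating quotients of $\widetilde G$; the substantive task is to strengthen these to quotients whose restrictions to $A$ and $B$ remain faithful.

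The natural method is a coset-diagram construction in the spirit of Higman, Conder and Everitt. I would start from a small family of ``base'' diagrams on sets $\Omega_0$ obtained by taking a transitive action of $A$ on cosets of a subgroup $A_0\leqslant A$ with $A_0\cap C=1$, and a transitive action of $B$ on cosets of a subgroup $B_0\leqslant B$ with $B_0\cap C=1$, then gluing the two over the common $C$-orbits. The intersection conditions guarantee that the restrictions $\alpha$ and $\beta$ are faithful. To reach every large $n$, I would iteratively splice onto $\Omega_0$ a fixed ``filler'' diagram --- for instance the natural degree-$6$ action of $\C_2\ast \C_3$ on $\A_5/\A_4$ --- along a single $C$-fixed point. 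Each splice enlarges $n$ by a constant while preserving the full $A$- and $B$-images on the base, and finitely many choices of base diagram cover all residues modulo this constant. A standard sign adjustment at each stage ensures the image lies in $\A_n$ rather than $\Sym(n)$.

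The main obstacle will be preserving the faithfulness of $\alpha$ and $\beta$ through the splicing: grafting can in principle introduce relations that further collapse $A$ or $B$. I would try to prove a ``rigidity'' lemma asserting that in each base diagram every nonidentity element of $A$ (respectively $B$) moves some point outside any potential splicing region, so that no element can accidentally be trapped into acting trivially after enlargement. A second delicate point is showing that the joint image is the full alternating group rather than a transitive proper subgroup: here one would invoke the classification of primitive groups of large degree via O'Nan--Scott and the Liebeck--Praeger--Saxl analysis to rule out proper primitive overgroups of $\langle\alpha(A),\beta(B)\rangle$. The deepest difficulty is uniformity --- the whole recipe should apply to \emph{every} admissible triple $(A,B,C)$ --- which forces the rigidity and image-control lemmas to depend only on combinatorial features of the two coset actions $A\to\Sym([A:C])$ and $B\to\Sym([B:C])$, and this seems to be the subtlest aspect of the entire program.
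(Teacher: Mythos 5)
This statement is not a theorem of the paper: it is stated as an open conjecture, attributed to Conder~\cite{Conder2019}, and the paper offers no proof of it (nor does one exist in the literature). So there is no argument of the authors to compare yours against, and what you have written cannot be accepted as a proof of the statement.

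Judged on its own terms, your text is a program outline rather than a proof, and you concede the decisive points yourself. The reduction to $K=1$ via $(A\ast_C B)/K\cong (A/K)\ast_{C/K}(B/K)$ is fine, but everything after that is aspirational. Concretely: (i) the claim that choosing $A_0\leqslant A$ with $A_0\cap C=1$ makes the coset action of $A$ on $[A:A_0]$ faithful is false as stated --- faithfulness requires the core of $A_0$ in $A$ to be trivial, which is a different and generally harder condition, and for some groups $A$ no suitable $A_0$ with both properties need exist; (ii) the ``rigidity lemma'' asserting that splicing a filler diagram never collapses $A$ or $B$ further is precisely the hard content of all known coset-diagram arguments (Higman, Conder, Everitt) and is only ever established by bespoke combinatorics for specific amalgams, not by a uniform argument over all admissible $(A,B,C)$; (iii) ruling out proper primitive overgroups of the joint image requires first proving primitivity (or at least controlling imprimitivity blocks) of the glued action, which your construction does not address. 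These are exactly the obstructions that keep the conjecture open, so naming them does not discharge them. The proposal should be presented as a heuristic discussion, not a proof.
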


We conclude this section by giving a natural generalization of the example in Subsection~\ref{ExSmallConcentric}. Let $m\geqslant7$ be an integer and let
\begin{align*}
H=\langle a_1,\dots,a_m\mid\ &a_i^2=1\text{ for }i\leqslant m,\ (a_ia_j)^2=1\text{ for }|i-j|\leqslant m-3,\\
&(a_1a_{m-1})^2=a_3,\ (a_2a_m)^2=a_4,\ (a_1a_m)^2=a_{m-2}\rangle.
\end{align*}
Then $H=\langle a_1,a_2,a_3,a_4,a_{m-2},a_{m-1},a_m\rangle\times\langle a_5,\dots,a_{m-3}\rangle\cong\mathcal{H}_7\times\C_2^{m-7}$. Let
\[
B=\langle a_1,\dots,a_{m-1}\rangle,\quad C=\langle a_2,\dots,a_m\rangle
\]
and let $\varphi:B\to C$ be the group isomorphism defined by
\[
a_i^\varphi=a_{i+1}\quad\text{for }\ i=1,\dots,m-1.
\]
Then $H=B\cup a_m B=C\cup a_1a_2C$. Let $x$ be the permutation on $H$ defined by
\[
b^x=b^\varphi\quad\text{and}\quad(a_mb)^x=a_1a_2b^\varphi\quad\text{for }\ b\in B.
\]
Denote the right regular representation of $H$ by $R$. Inspired by~\cite{Spiga2016} and the results in Subsection~\ref{ExSmallConcentric}, we make the following conjecture.

\begin{conjecture}
Let $H$, $R$ and $x$ be as above. Then
\[
\Cos(\Alt(H),R(H),R(H)\{x,x^{-1}\}R(H))
\]
is a connected tetravalent graph on which the right multiplication action of $\Alt(H)$ is half-arc-transitive.
\end{conjecture}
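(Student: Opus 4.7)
The plan is to adapt the strategy underlying Example~\ref{ExSmallConcentric} and Proposition~\ref{PrpInfiniteFamily}. Four assertions need to be established:
\begin{enumerate}[{\rm (i)}]
\item $x$ is a well-defined even permutation of $H$;
\item $x^{-1}R(H)x\cap R(H)$ has index $2$ in $R(H)$, so that the coset graph has valency $4$;
\item $x^{-1}\notin R(H)xR(H)$, so that $\Alt(H)$ has two orbits on arcs and hence acts half-arc-transitively;
\item $\langle R(H),x\rangle=\Alt(H)$, which yields both connectedness and the fact that $\Alt(H)$ is the acting group.
\end{enumerate}

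First I would tackle the more mechanical items~(i)--(iii) by direct manipulation of the defining relations of $H$. For~(i), $x$ is manifestly a bijection of $H$ because $\varphi\colon B\to C$ is an isomorphism and $H=B\sqcup a_mB=C\sqcup a_1a_2C$; a routine parity computation on the cycle structure of $x$ then delivers $x\in\Alt(H)$. For~(ii), since $x$ intertwines right multiplication by $b\in B$ with right multiplication by $b^\varphi\in C$, one expects
\[
x^{-1}R(H)x\cap R(H)=R(B\cap C)=R(\langle a_2,\dots,a_{m-1}\rangle),
\]
which has index $2$ in $R(H)$. For~(iii), evaluating a putative equation $x^{-1}=R(h_1)xR(h_2)$ at the vertices $1$ and $a_m$ of $H$ and applying the three non-commuting relations $(a_1a_{m-1})^2=a_3$, $(a_2a_m)^2=a_4$ and $(a_1a_m)^2=a_{m-2}$ should produce a contradiction.

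The main obstacle is~(iv). The case $m=7$ is handled by~\cite{Spiga2016} and the case $m=8$ by Example~\ref{ExSmallConcentric} (verified with \magma). For general $m$ my plan is twofold: first to show that $\langle R(H),x\rangle$ acts primitively on $H$, and then to produce an element of small support so as to invoke a Jordan-type theorem. Primitivity should follow from the observation that the shift $a_i\mapsto a_{i+1}$ does not respect the direct decomposition $H\cong\mathcal{H}_7\times\C_2^{m-7}$, so the block systems arising from subgroups of $H$ invariant under this decomposition are excluded; a case-by-case analysis of $x$-invariant subgroups of $H$ should then rule out the remaining candidates. For the small-support element, the natural candidates are the commutators $[x,R(a_i)]$ for indices $i$ near $m/2$, where $\varphi$ acts locally near $a_i$ and so the support of such a commutator ought to remain bounded, or at worst grow sublinearly, in $m$. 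Granted primitivity together with such an element, Jordan's theorem forces $\langle R(H),x\rangle\in\{\Alt(H),\Sym(H)\}$, and the parity check from~(i) pins it down to $\Alt(H)$.

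The hardest part will be making both the primitivity argument and the commutator-support estimate uniform in $m$. An alternative route, closer in spirit to Proposition~\ref{PrpInfiniteFamily}, would be to locate an auxiliary involution $y\in\Alt(H)$ for which an associated Cayley graph on $\Alt(H\setminus\{1\})$ can be proved connected (paralleling~\cite[Lemmas~4.1--4.2]{CXZ2018}), thereby permitting an induction on $m$; however, the absence of a natural $\mathcal{H}_7$-respecting involution $y$ in the present set-up suggests that this path will demand additional ingenuity.
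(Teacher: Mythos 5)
First, a point of order: the paper does not prove this statement --- it is posed as an open conjecture at the end of Section~\ref{sec:5}, so there is no proof of record to compare yours against, and a correct argument here would be new. Your proposal, by your own account, is a programme rather than a proof: item~(iv), the generation statement $\langle R(H),x\rangle=\Alt(H)$, is precisely the hard content of the conjecture (it is what \magma verifies in the finitely many known cases $m=7,8$), and your treatment of it consists of two unverified hopes (``should follow'', ``ought to remain bounded'') that you yourself flag as the hardest part. Until (iv) is established uniformly in $m$, the statement remains a conjecture.

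Beyond the incompleteness, two steps fail concretely as written. In~(ii), the intertwining you invoke gives $x^{-1}R(b)x=R(b^\varphi)$ for all $b\in B$ (check it separately on the two cosets $C$ and $a_1a_2C$), hence $R(C)=x^{-1}R(B)x\leqslant x^{-1}R(H)x\cap R(H)$; the intersection is therefore $R(C)$, of index $2$, and not $R(B\cap C)=R(\langle a_2,\dots,a_{m-1}\rangle)$, which has index $4$ in $R(H)$ (not $2$, as you assert) and would give valency $8$. The intended conclusion (valency $4$) survives, but only via $R(C)$, consistent with what is recorded in Example~\ref{ExSmallConcentric}. More seriously, in~(iv) your proposed small-support elements are useless: by the same intertwining,
\[
[x,R(a_i)]=x^{-1}R(a_i)^{-1}xR(a_i)=R(a_{i+1})R(a_i)=R(a_{i+1}a_i)\qquad(1\leqslant i\leqslant m-1),
\]
a nontrivial element of the regular subgroup $R(H)$ and hence a fixed-point-free permutation of $H$ with support of size $2^m$. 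No element of $R(H)\setminus\{1\}$ can feed a Jordan-type theorem; you would have to manufacture bounded-support elements from genuinely mixed words in $x$ and $R(H)$, and no candidate is exhibited. Likewise, primitivity of $\langle R(H),x\rangle$ is not delivered by the remark about the direct decomposition of $H$: blocks through the identity for a group containing the regular subgroup $R(H)$ correspond to subgroups of $H$ invariant under the full point stabilizer $\langle R(H),x\rangle_1$, which is exactly the object you do not yet control. So the proposal identifies the right subgoals (i)--(iv) but proves none of the hard ones, and the specific mechanisms offered for (ii) and (iv) are incorrect.
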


If this conjecture is true then Theorem~\ref{ThmCover} will imply that for every integer $m\geqslant7$ there exist infinitely many finite connected tetravalent half-arc-transitive graphs with vertex stabilizer $\mathcal{H}_7\times\C_2^{m-7}$.

\end{document}